\theoremstyle{plain}
\newtheorem{theorem}{Theorem}[section]
\newtheorem{lemma}[theorem]{Lemma}
\theoremstyle{definition}
\newtheorem{definition}[theorem]{Definition}
\def\Shape(#1){\operatorname{Shape}(#1)}
\newcommand{\magma}{{\sc Magma}}
\newcommand{\Z}{\mbox{\bf Z}}
\newcommand{\Q}{\mbox{\bf Q}}
\newcommand{\F}{\mbox{\bf F}}
\newcommand{\R}{\mbox{\bf R}}
\newcommand{\C}{\mbox{\bf C}}
\newcommand{\Prj}{\mbox{\bf P}}
\newcommand{\XN}{X_0(N)}
\newcommand{\X}{X_0(108)}
\newcommand{\kq}{k(108)}
\newcommand{\kd}{k'(108)}
\newcommand{\BN}{B_0(N)}
\newcommand{\AN}{A_0(N)}
\newcommand{\JN}{J_0(N)}
\newcommand{\autC}{\mbox{Aut}_{\scriptstyle{\bf C}}(\XN)}
\newcommand{\rtm}{\sqrt{-3}}
\begin{document}
\title{A New Automorphism of $X_0(108)$}
\author{Michael Harrison} 
\address{School of Mathematics and Statistics F07, University of Sydney, NSW 2006, Australia}
\keywords{Modular curves, Curve automorphisms, Magma}
\begin{abstract}
Let $\XN$ denote the modular curve classifying elliptic curves
  with a cyclic $N$-isogeny,
$\AN$ its group of algebraic automorphisms and $\BN$ the subgroup of automorphisms 
coming from matrices acting on the upper half-plane. In a well-known
paper, Kenku and Momose showed that $\AN$ and $\BN$ are equal (all automorphisms
come from matrix action) when $\XN$ has genus $\ge 2$, except for $N = 37$ and $63$.

However, there is a mistake in their analysis of the $N = 108$ case. In the
style of Kenku and Momose, we show that
$B_0(108)$ is of index 2 in $A_0(108)$ and construct an explicit new automorphism
of order 2 on a canonical model of $\X$.
\end{abstract}
\maketitle

\section{Introduction}
For a positive integer $N$, the modular curve $\XN$ parametrises elliptic curves
with a cyclic $N$-isogeny. Over $\C$, it is isomorphic, as a Riemann surface, to the 
quotient of the extended upper half-plane (${\tau \in \C : \Im(\tau) > 0} \cup \Q
\cup {i\infty}$) by the subgroup $\Gamma_0(N)$ of $SL_2(\Z)$
  consisting of determinant 1,
integral matrices $\left({a \atop c}{b\atop d}\right)$ with
$c \equiv 0$ mod $N$, acting as $\tau \mapsto (a\tau + b)/(c\tau + d)$
(\cite{Sh71} or \cite{Mi89}).

$\XN$ has a natural structure of an algebraic curve over $\Q$, defined in
\cite[Ch. 7]{Sh71} or more technically, as the generic fibre of the compactification 
of a modular scheme over $\Z$ \cite{KM85}.

The normaliser $Nm(N)$ of $\Gamma_0(N)$ in $SL_2(\R)$ acts on the extended upper
half-plane and leads to a finite subgroup $\BN$ of $\AN \stackrel{\mathrm{def}}{=} 
\autC$ isomorphic to $Nm(N)/\Gamma_0(N)$. The group-theoretic structure
of $\BN$ is given in \cite{Bar08}. The natural question, when the genus $g_N$ of
$\XN \ge 2$ and so $\AN$ is finite, is whether $\BN$ is all of $\AN$. For
$N = 37$ this was famously known not to be the case: $X_0(37)$ is of genus
2 and is thus hyperelliptic, but the only non-trivial element of $B_0(37)$ is
not a hyperelliptic involution. Ogg showed that this is the only case with
$\AN$ larger than $\BN$ when $N$ is squarefree \cite{Ogg77}.

Using deeper properties
of the minimal models of $\XN$ and its Jacobian $\JN$ and some further techniques, 
Kenku and Momose extended the analysis to all $N$ (with $g_N \ge 2$) in \cite{KM88},
claiming that $\AN = \BN$ except for $N = 37$ and possibly $N = 63$, and that
the index of $\BN$ in $\AN$ is 1 or 2 in the latter case. Subsequently, Elkies
showed that $N = 63$ is indeed an exceptional case and gave an elegant construction
of an additional automorphism \cite{Elk90}. Kenku and Momose eliminate almost all
$N$ by a combination of arguments that lead to only seven values (including 63 and
108) for which case-by-case detailed analysis is required.

For $N = 108$, a 
hypothetical automorphism $u$ not in $\BN$ is considered and is used to construct
a non-trivial automorphism $\gamma$ in $\BN$ with various properties. $\JN$
decomposes (up to isogeny) into 10 elliptic curve factors and it is claimed that
$\gamma$ must act upon a particular one $E$ as $\pm 1$. From this, further analysis
leads to a contradiction. However, $E$ has $j$-invariant $0$ and it isn't
clear from the construction why $\gamma$ (which has order $2$ or $3$)  could not act
as a 3rd root
of unity on $E$. I tried to derive a different contradiction assuming this, but
everything seemed consistent if $\gamma$ was assumed to have order 3 and act on
each of the 6 CM components of $\JN$ as an appropriate 3rd root of unity.

$B_0(108)$ has order 108 by \cite{Bar08}. Kenku and Momose show
that all automorphisms of $\X$ are defined over the field they denote by $\kd$,
which is the ring class field mod 6 of $\kq = \Q(\sqrt{-3})$ : explicitly
$\kd$ is $\Q(\sqrt{-3},\sqrt[3]{2})$. The primes $p$ splitting completely in
$\kd$ are $31, 43, 109, \ldots$.

Out of interest, I decided to compute the automorphisms over the finite
fields $\F_p$ for the first few split primes using \magma \cite{Mag97}.
Equations for a
canonical model of $\X$ are found directly and reduced mod $p$ using
the modular form machinery. The full set of automorphisms is then 
returned very quickly by the built-in functions provided by Florian Hess.
To my surprise, in each case the number of automorphisms was 216, twice the
order of $B_0(108)$!

A slightly longer \magma\ computation over $\F_{31}$ returned an abstract group 
$G$ representing the automorphism group and it was readily verified by
further \magma\ function calls that $G$ did contain a subgroup $H$ of index 2
with the structure of $B_0(108)$ as described by Bars.
It remained to construct a new $u$ in characteristic 0. It is possible to just
mechanically run the \magma\ routines again but working over $\kd$ for
the automorphism group computations is {\it much} slower and besides, as an important
special case, it is desirable to provide a construction with some level of
transparent mathematical detail rather than just the output of a generic
computer program.
\medskip 

In the third section, I review Kenku and Momose's analysis of the
$N=108$ case and show that, after removing the error, it can be adapted
to prove that $B_0(108)$ is of index 1 or 2 in $A_0(108)$ and also give the
isomorphism type of $A_0(108)$ in the index 2 case.

In the fourth section, I give my construction of a new automorphism $u$ of order 2,
using detailed modular information about $\X$ and its Jacobian. Specifically,
I use the explicit action of standard generators of $B_0(108)$ on a natural
modular form basis for the differentials of $\X$ along with the commutator relations
within $A_0(108)$ for $u$ to find a fairly simple matrix, involving 2
undetermined parameters $a$ and $b$, giving the action of $u$ on the differentials.
To proceed further, I used computer computations
to first determine the relations for the canonical model of $\X$ w.r.t.~the
differential basis and then to solve for $a$ and $b$. The latter involves
computing a Gr\"obner basis
for the zero-dimensional ideal in $a$ and $b$ that comes from substituting the
matrix for $u$ into the canonical relations. The resulting
automorphism on the canonical model is defined over $\kd$ but not $\kq$ (the
field of definition of $B_0(108)$), as it should be.
\medskip

Subsequent to my discovery and semi-computerised construction of a new
automorphism, Elkies learned of the error through Mark Watkins. Using a neat
function-theoretic argument, similar in some respects to the $N=63$ case, he
was able to derive a particularly simple geometric model of $\X$ as the intersection of two
cubics in $\Prj^3$ as well as explicitly writing down all automorphisms without the
need for computer computations. Elkies construction gives an independent verification of the
corrected result for $N = 108$ and will be published by him elsewhere.
\medskip

Finally, I believe that there are no other exceptional $N$. \cite{KM88} is a very nice paper
but it does seem to contain a number of mistakes, most of which have no bearing on the final
result. In particular

\begin{itemize}
\item The last two numbers in the statement of Lemma 1.6 should be $2^3\cdot 3^2$
and $2^2\cdot 3^3$
\item A number of expressions in the proof of Lemma 1.6 are incorrect. Especially, most of
the expressions in the table for $\mu(D,p)$ in different cases are wrong.
\item $N=216=2^33^3$ listed in Corollary 1.11 and treated as a special case in the rest of the
paper along with the other 3 values is not actually a special case! This is clearly harmless.
\item In Lemma 2.15, $p^2-1$ should be $p^2+1$.
\item In the first part of the main theorem, 2.17, the list of $N$ for various $l \le 11$
that cannot be eliminated by lemmas 2.14 and 2.15 contain a number of cases that can be
easily eliminated by Corollary 2.11. However, there are a number of unlisted cases that
cannot be eliminated by any of these results. If I have worked it out correctly, I think that
these are (I'm ignoring the obvious typos in the lists here - e.g. the second $N$ listed for
$l=11$ should have $2^2$ rather than $2^3$ as a factor) $2^2\cdot 3^2 \cdot 7$ for
$l=5$, $2^4\cdot 11$ for $l=3$ and $3^3\cdot 5$ and $3^2\cdot 19$ for $l=2$.
The first two can be eliminated using Lemma 2.16 as is done for the other listed values of
$N$. For the other two ($l=2$) cases, a slightly improved version of lemma 2.15 can be applied 
over $p=2$ with $D = D_2$ that takes into account multiplicities in both the zeroes and poles
of $D$. This gives an upper bound 26 for $\mathcal{X}_0(N)(\F_4)$ which is less than the actual 
values (30 and 28 respectively) for the two cases.
\end{itemize}

The special analyses for the final six cases (ignoring $N = 37$ and $63$) in the proof of 
Theorem 2.17 all seem OK to me except when $N=108$. This is borne out by some \magma\
computations I performed more recently that are described in the next section.

\section{Computer verifications for $N \le 200$}

I performed a batch of computer computations using \magma\ to check the size of
the automorphism group of $X_0(N)$ over an appropriate small finite field $\F_p$
for all $N \le 100$ for which the genus is $\ge 2$ and for all non-square-free $N$
with $101 \le N \le 200$. The square-free case is covered by Ogg's earlier result.
In fact, $N = 4M$ or $8M$ with $M$ odd and square-free is also a simpler case
where all cusps are rational, there are no no non-cuspidal rational points (by
results of Kenku and Mazur : references 7-10 and 15 of \cite{KM88}) and all
automorphisms are defined over
$\Q$ ($k(N) = \Q$ in Kenku and Momose's notation). The result then follows from
Corollary 2.3 of \cite{KM88}, which shows that the subgroup of automorphisms
fixing $i\infty$ is of order 2 (generated by $\left({1 \atop 0}{1/2\atop 1}\right)$),
and the fact that $\#B_0(N)$ is twice the number of cusps ({\it cf} Proposition 2.8,
\cite{KM88}). Alternatively, there are no elliptic points when $4 | N$, so if an
automophism $\alpha$ preserves the cusps, its restriction to $Y_0(N)$ lifts to
an automorphism
of the upper half-plane, which is the simply-connected covering surface of $Y_0(N)$.
It then follows easily that $\alpha \in B_0(N)$. We checked these cases anyway. 
\medskip

The first section of \cite{KM88} is largely concerned with determining a number
field over which all automorphisms are defined. If $k(N)$ is the composite of
all quadratic fields the square of whose discriminant divides $N$, they show that
$k(N)$ will do except possibly in 3 cases: $N = 108, 256$ and $512$ ($216$ is also erroneously
listed) when the $k'(N)$ defined in Remark 1.12 can be used.
The primes $p$ were chosen to not divide $N$ and to split in $k(N)$ (or $k'(N)$).
The full $A_0(N)$ then injects into the group of automorphisms over $\F_p$ and
computing these gives an upper bound for $\#A_0(N)$.
\bigskip

Since the field of definition is of vital importance for the validity of the
computations as well as the theoretical results, we first performed some computations
to check the result of Lemma 1.6 \cite{KM88},
where there are errors in the list of exceptional $N$ and in the
formulae for $\mu(D,p)$, as noted in the introduction. Specifically, the formulae
on pages 58 and 59 for $\mu(D,p)$ in the cases $p \nmid D$, $p$ and $n$ odd or even, are
all too large by $n-1$ and in the formulae for $p=2$, $4 \parallel D$, $-4$ should be
replaced by $-6$.

We note that formula (1.8) on page 57 of \cite{KM88}, which gives an upper bound for
$g_C(N)$, actually gives an equality after a slight adjustment when $9|N$.
In that case, the term
on the right of (1.8) for $D=3$, $(1/3)\prod_{p|N}\mu(3,p)$, must be replaced by
$(1/3)[(\prod_{p|N}\mu(3,p))-2^n]$, where $n \ge 0$ is the number of distinct primes
apart from $3$ that divide $N$. The reason is as follows. The product would give the 
appropriate contribution for Hecke characters $\chi$ of $\Q(\zeta_3)$ except that it
doesn't take into account the extra condition on the `finite part', $\lambda$, of the 
character
($\chi((\alpha)) = \lambda(\alpha)\alpha$) that $\lambda(\zeta_3) = \zeta_3^{-1}$
where $\zeta_3$ is a primitive third root of unity. 
There are an equal number of $\lambda$ satisfying
$\lambda(\zeta_3) =\zeta_3$, as is easily seen by considering
complex conjugates of characters, but there is a slightly greater number that satisfy
$\lambda(\zeta_3) = 1$. This accounts for the negative adjustment before dividing by 3.
The exact value, $-2^n$, can be derived by showing it to be $-1$ when $N=3^r$ and then
arguing by induction.
\medskip

Given the formula for $g_C(N)$ in terms of $\mu(D,p)$ and the corrected formulae
for the latter, we wrote a short program to compute $g_C(N)$. We performed a
partial check on the correctness of our formulae by comparing our computed values of
$g_C(N)$ for $N \le 1000$ with the dimension of the subspace of weight 2 cusp forms
with complex multiplication computed via William Stein's modular forms package.
Then we computed $g_C(N)$ and $g_0(N)$ (the full genus of $X_0(N)$) by formula
for all $N \le 10000$ (which took less than a second) and found that
the only $N$ for which $2 \le g_0(N) \le 1+g_C(N)$ are indeed those listed in
typo-corrected Lemma 1.6 (minus $216$): namely, 54, 72, 81, 108, $2^6, 2^7, 2^8, 2^9$.
$g_0(216)=25$ and $g_C(216)=10$. The bounds (even better using the smaller values
of $\mu(D,p)$) given by Kenku and Momose to restrict the set of
$N$ for which $g_0(N) \le 1+g_C(N)$ show that all such bad $N$ are comfortably less
than $10000$.

We also checked with Stein's package that when $N = 54,72,81,64$ or $128$, $J_C(N)$
decomposes up to isogeny into a product of elliptic curves which all have CM by
the quadratic field $k(N)$ and are isogenous over $k(N)$. Hence, all endomorphisms of
$J_C(N)$ are defined over $k(N)$ in these cases. This leaves the 3 values
$108, 2^8, 2^9$ given in Corollary 1.11.
\bigskip

To compute the number of automorphisms over $\F_p$, $p \nmid N$, we needed to
compute equations
for the reduction of the $X_0(N)$. In the hyperelliptic cases, we simply took
well-known minimal Weierstrass models with integer coefficients and reduced these
mod $p$. In the other cases (with one exception), we computed a mod $p$ canonical
model. The structure of $\mathcal{X}_0(N)$, the minimal model of the $X_0(N)$ 
(see \cite{KM85}), and Katz'
theory on the relation between weight 2 cusp forms and differentials
in the integral case shows that if $f(q)$ is a weight 2 form with integer $q$-expansion,
then $\omega_f = f(q)(dq/q)$ extends to a holomorphic differential on the smooth part of
$\mathcal{X}_0(N)$ and that $q$ extends to a formal parameter for the $\Z$-section
$\boldsymbol{\infty}$ of $\mathcal{X}_0(N)$ with generic point $i\infty$ (the formal
completion of the section is the formal spectrum of $\Z[[q]]$). This shows that
the reduction mod $p$ of the power series $q$-expansion of $f$ divided by $q$
gives the formal power series expansion of  $(\omega_f \mbox{ mod } p)/dq$
at the mod $p$ reduction of $i\infty$ with respect to local parameter $q$.

Therefore, if
$f_1,\ldots,f_g$ is a basis for the weight 2 cusp forms (over $\C$) with
integer $q$-expansions and if the reductions mod $p$, $\tilde{F}_1(q),\ldots,
\tilde{F}_g(q)$ of these expansions divided by $q$ remain linearly-independent
over $\F_p$, then the
$\tilde{F}_i(q)$ are the local expansions of a mod $p$ basis of differentials.
Hence the polynomial relations between them over $\F_p$ are the defining equations for the
mod $p$ canonical image. Just as over $\C$, to verify a homogeneous degree $d$
polynomial relation, we only need to check it up to $O(q^{2(g_0(N)-1)d+2})$ since a
product of $d$ of the forms gives a holomorphic $d$-differential on the reduction
of $X_0(N)$.

To compute a basis for degree $d$ canonical relations, we computed the kernel
over $\F_p$ of the matrix with rows containing the coefficients of the
$q$-expansions up to sufficient degree of degree $d$ monomials in the $\tilde{F}_i(q)$.
The theory of canonical models of curves tells us
that for genus $\ge 5$, the ideal of relations of the canonical image is generated by 
quadrics and
possibly cubics (in trigonal or plane quintic genus 6 cases) and that the
canonical image is isomorphic to the curve unless the curve is hyperelliptic,
when it is a rational normal curve. For genus $3$ and $4$, things are also
straightforward.

It is just possible that a mod $p$ reduction of $X_0(N)$ could degenerate into
a hyperelliptic curve. However, this did not occur in any case, as was easily checked
by computing the degree of the canonical image to be $2g_0(N)-2$ rather than
$g_0(N)-1$. Also, no trigonal or plane quintic cases occured, as was checked by
computing that the quadric relations defined a dimension $1$ (not $2$) scheme.
\bigskip

After computing mod $p$ equations, the next stage was to compute a \magma\ 
function field for the mod $p$ curve. The standard \magma\ function for this was
used, which constructs it from a well-chosen
affine coordinate ring by localising at an appropriate base variable and then computing
a diagonal Gr\"obner basis for the zero-dimensional localised ideal of relations.
For $N \le 100$, this never took longer than a few seconds. For the higher genus
cases in the $101 \le N \le 200$ range, the computations took longer and 
Gr\"obner sensitivity to the defining equations as well as the degree of the function field
over its chosen $\F_p(x)$ base became more important. A number of slight variations
were used, switching to another if processing was going slowly. In each case, we
started with an basis of weight 2 forms with integer coefficients, computed using
Stein's modular forms package. We then applied a $\Z$-linear transformation to this, 
corresponding to taking an LLL lattice-basis for the matrix of leading coefficients.
This idea is used by Mark Watkins in his implementation of the general \magma\
function to compute canonical models of $X_0(N)$ and leads to $q$-expansions
with smaller coefficients and nicer canonical relations. In the default version,
we just computed the quadric relations of the mod $p$ reduction of this basis
for the canonical model. As a variant, which was much faster in some cases, we
echelonised the mod $p$ matrix of coefficients of this basis. This generally
produces less-sparse canonical defining polynomials, but can also produce a smaller
degree rational function given by the ratio of two homogeneous canonical coordinates.
This is picked up by the standard function, which chooses
them in order to construct a smaller degree function field. In a few bad cases,
both methods were very slow, but the third method of getting canonical equations
by reducing a further LLL-ised set of integral canonical relations computed for
the basis over $\Q$ worked. In the end, using one of these methods we computed
the mod $p$ function fields in no more than a few minutes in the worst cases, except
for $N=120$. For this one case, we computed the function field
as a biquadratic extension of that of the hyperelliptic curve $X_0(30)$,
first extending to $X_0(60)$ and then to $X_0(120)$ by adding smallish degree
functions given by eta-products and using \magma's divisor machinery for curves
to help compute the quadratic relation that each new function satisfied over
the previous function field.
\medskip

Finally, we computed all automorphisms of $X_0(N)$ over $\F_p$
by passing the mod $p$ function field to the buit-in function for this which
was provided by Florian Hess. This basic method, without further simplifications,
was adequate in most cases, although it took 2-3 hours to return the result
in some of the worst. In the highest genus cases where $g_0(N) \ge 22$
($N = 156,168,180,188,198$), we employed the following speed-up. In each case,
\magma\ quickly computed that the number of
$\F_p$-rational places of the function field was small - equal to the number
of cusps - for the chosen small $p$. We then just used Hess' function to
compute the automorphisms that fixed a particular place. The number of these
multiplied by the number of places gave an upper bound for the
total number of $\F_p$-automorphisms. This was actually the exact number,
since the $\F_p$-places correspond precisely to the reduction of the
cusps and $B_0(N)$ acts transitively on the cusps in each of these cases.

In this way, we computed an upper bound for $\#A_0(N)$ for each $N$ and found
that it was exactly $\#B_0(N)$ except in the three exceptional cases of
$N=37,63,108$, when it was twice $\#B_0(N)$. In only one case, $N=132$,
did our first choice of $p$ ($p=5$) not lead to the desired bound.
In that case, recomputing with $p=7$ worked. Our range of $N$ included
the 7 values mentioned in the introduction for which Kenku and
Momose had to perform a special case analysis.

Finally, we should note that the mod $p$ computations were much faster than
the corresponding characteristic $0$ ones (over the various $k(N)$) would
have been. Beside the problem of coefficient blow-up, Hess' function
needs to compute and work with the Weierstrass places of the function field,
at least in char. 0. In many cases, there are only a few of these places,
corresponding to many Galois-conjugate points, which are of large residue
degree. A few of the internal computations involve extending to these
residue fields, which causes processing to grind to a halt due to the
overhead of working over number-fields of very high degree. Over finite
fields, particularly small ones, the Weierstrass places split into a
larger number of small degree ones and working over finite extensions
of $\F_p$ is not so computationally onerous in any case. Also, there is
the choice to use $\F_p$-rational places, rather than Weierstrass places.

\section{Automorphisms of $\X$: Generalities}

We reconsider the analysis of the $\X$ case as given on pages 72 and 73 of 
\cite{KM88} and show that the correct conclusion is that $B_0(108)$ is of index
1 or 2 in $A_0(108)$ rather than that $A_0(108)$ is necessarily equal to 
$B_0(108)$.
\medskip

\noindent\underline{Notation:} 
\smallskip

$\X$, $J = J_0(108)$, $A_0(108)$, $B_0(108)$, $\kq$ and $\kd$ are as described
in the introduction. $\sigma$ denotes one of the two generators of
$G(\kd/\kq)$.

$w_n$ will denote the Atkin-Lehner involution on $\X$
for $n|108$, $(n,108/n)=1$ (see, eg, \cite{Mi89} or \cite{Bar08}). 
Explicitly, we could take matrix representatives mod $\R^*\Gamma_0(108)$
for the actions of $w_4, w_{27}$ and $w_{108}$ on the extended upper half-plane as

$$ w_4 = \left(\begin{array}{cc}28 & 1 \\108 & 4 \end{array}\right)\qquad
   w_{27} = \left(\begin{array}{cc}27 & -7 \\108 & -27 \end{array}\right)\qquad
   w_{108} = \left(\begin{array}{cc}0 & -1 \\108 & 0 \end{array}\right)$$

\noindent $w_1$ is trivial. Up to scalars, the matrix for $w_{27}$ is an involution and 
the matrix for $w_{108}$ is the product of those for $w_{27}$ and $w_4$.

For $v|6$, $S_v$ will denote the element of $B_0(108)$ represented by the
matrix $\left({1\atop 0}{(1/v)\atop 1}\right)$.
\medskip

$B_0(108)$ is generated by $w_4, w_{27}, S_2$ and $S_3$. Its group structure
is described fully later in the next section. We note here that the first three
generators have order 2 and the last has order 3. The subgroup $S$ of $B_0(108)$
commuting with $\langle w_4, w_{27}\rangle$ is

$$ \langle w_4\rangle \times \langle w_{27}\rangle \times \langle\tau_3\rangle$$

\noindent where $\tau_3$ is the element of order 3 in the centre of $B_0(108)$
defined by

$$ \tau_3 := S_3 w_{27} S_3 w_{27} $$

\noindent ($S_3$ and $w_{27} S_3 w_{27}$ commute).
\medskip

The argument on page 73 of \cite{KM88} considers a hypothetical automorphism
$u$ in $A_0(108)$ not lying in $B_0(108)$. It is shown that $u$ is defined
over $\kd$ but not over $\kq$ and the non-trivial automorphism $\gamma$
is defined as $u^\sigma u^{-1}$. Note that all cusps are defined over $\kq$,
that $B_0(108)$ acts transitively on the cusps and that, by their Corollary 2.3,
any automorphism is determined by its images of $\infty$ and any other cusp.
This shows, in particular, that all elements of $B_0(108)$ are defined over
$\kq$. $\gamma$ is shown to lie in $B_0(108)$.
\smallskip

Let $f_{27}$, $f_{36}$ and $f_{108}$ denote the primitive cusp forms associated
to the unique isogeny classes of elliptic curves with conductors 27, 36 and 108
respectively. These curves all have complex multiplication by orders of $\kq$.
Kenku and Momose consider the decomposition up to isogeny of $J$ into the product
$J_H \times J_{C_1} \times J_{C_2}$ where $J_H$ is the part without CM,
$J_{C_1}$ is associated to the eigenforms $\{f_{36}(z), f_{36}(3z), f_{108}(z)\}$
and $J_{C_2}$ is associated to the eigenforms $\{f_{27}(z), f_{27}(2z), f_{27}(4z)\}$.
They show that $\gamma$ acts trivially on the $J_H$ factor and that its order $d$
and the genus $g_Y$ of the quotient $\X/\langle\gamma\rangle$ satisfies (i) $d=2$,
$g_Y = 4,5$
or (ii) $d=3$, $g_Y=4$. It is also shown that $\gamma$ commutes with $w_4$ and
$w_{27}$ and so lies in $S$.

$E$ is the new elliptic curve factor of $J_{C_1}$ corresponding to $f_{108}$.
The error comes with the line ``Then $\gamma$ acts on $E$ under $\pm 1$". This
eliminates case (ii) above and leads to a contradiction on the existence of $u$.
However, there is the possibility that 
\smallskip

\noindent (*) $\gamma$ acts on (the optimal quotient isogeny 
class of) $E$ by a non-trivial 3rd root of unity and case (ii) occurs.
\smallskip 

We see in the next section that this actually can occur when we explicitly 
construct such a $u$. To have order 3 and lie in $S$, $\gamma$ must equal $\tau_3$ 
or $\tau_3^{-1}$. That (*) holds for such $\gamma$ follows from the determination
of the action of the generators of $B_0(108)$ on a nice basis for the cusp forms
given in the construction. This shows that
$\tau_3$ fixes the non-CM forms defining the $J_H$ factor
and multiplies each of the six CM Hecke eigenforms given above by some
non-trivial 3rd root of unity as required.
\medskip

So for any automorphism $u$, $u^\sigma u^{-1}$ is trivial or equal to $\tau_3$
or $\tau_3^{-1}$. As Kenku and Momose show that all automorphisms defined over $\kq$ 
are in $B_0(108)$, this implies that $B_0(108)$ is of index at most three in 
$A_0(108)$.

However, this can be improved by considering the action on the reduction mod 31
of $\X$ and arguing as Kenku and Momose do to show that $\gamma$ is defined over $\kq$. All 
automorphisms are defined over $\F_{31}$ as 31 splits in $\kd$. If $u$ and
$v$ are two automorphisms not in $B_0(108)$, then exactly the same argument
near the top of page 73 applied to $u$ and $u^\sigma$ can be applied to $u$
and $v$ to show that $vu^{-1}$ lies in $B_0(108)$. This shows that
$B_0(108)$ is of index at most 2 in $A_0(108)$. Note that
the sentence on page 73 starting ``Applying lemma 2.16 to $p = 7\ldots$'' should
contain $p = 31$ rather than $p = 7$ and there should be a comment that lemma 2.16
is being applied here with any pair of cusps replacing $\mathbf{0}$ and $\infty$,
which is permissible as the same proof works. So, replacing $\sigma$ by $\sigma^{-1}$
if necessary, we have that (remembering that $\tau_3$ is in the centre of $B_0(108)$)
\medskip

\noindent (+) $B_0(108)$ is of index 1 or 2 in $A_0(108)$ and any automorphism 
$u\notin B_0(108)$ satisfies $u^\sigma u^{-1} = \tau_3$.
\bigskip

Now, we assume that $A_0(108)$ is bigger than $B_0(108)$ and show that its group structure
can then be determined from the above information and the abstract group structure of 
$B_0(108)$.
\medskip

We denote a cyclic group of order $n$ by $C_n$.
\cite{Bar08} gives the structure of $B_0(108)$. Abstractly, it is the
direct product $D_6 \times (C_3 \wr C_2)$ where the first factor is the
dihedral group of order 6 and the second is the order 18 wreath product
(the semidirect product of $C_3 \times C_3$ by $C_2$, the generator of $C_2$
 swapping the two $C_3$ factors). The $D_6$ factor is generated by $S_2$ and 
$w_4$, which both have order 2. The wreath product is generated by $S_3$ (order 3)
and $w_{27}$ (order 2), so that $S_3$ and $w_{27}S_3w_{27}$ are two commuting
elements of order 3 generating the order 9 subgroup. The centre of $B_0(108)$
is of order 3, generated by $\tau_3 = S_3w_{27}S_3w_{27}$.

The automorphism group of $B_0(108)$ is easy to determine on writing it as 
$D_6 \times D_6 \times C_3$. The outer automorphism group is
$C_2 \times C_2$. This can also be easily checked in \magma, for example.
\medskip

Now if $u$ is not in $B_0(108)$, $u^2$ is in $B_0(108)$ and so is fixed by $\sigma$.
Then, (+) above shows that $u\tau_3u^{-1} = \tau_3^{-1}$ so that $u$ acts by conjugation
on $B_0(108)$ (which is normal in $A_0(108)$ having index 2) as an outer automorphism,
since $\tau_3$ is central in $B_0(108)$. Also, we can assume $u$ has 2-power order and,
as the kernel of the map of $B_0(108)$ to its inner automorphism group is of
order 3, $A_0(108)$ is then determined up to isomorphism if we can determine
the image of $u$ in the outer automorphism group $H$ of $B_0(108)$. $H$ has 3
non-trivial elements giving extensions of $B_0(108)$ of degree 2. But the
condition that $u$ doesn't centralise $\tau_3$ excludes one of these elements.
Another element would leads to a $u$ of order 2 commuting with $\langle w_4,w_{27}\rangle$
and $S_2$. From the explicit action of $S_2$ on weight 2 forms (see next section)
we see that $u$ would have to preserve the $(w_{27}-1)J_{C_1}$ ($=E$) and
$(w_4-1)J_{C_2}$ elliptic curve factors of the Jacobian, so act as $\pm 1$ on $E$.
$\gamma$ {\it would} then act trivially on $E$ and the argument of Kenku and Momose
would properly lead to a contradiction. Thus, there is only one possibility for
$u$ in $H$ and one possible group structure for $A_0(108)$. Explicitly, we find

\begin{lemma}\label{le1}
If $A_0(108)$ is larger than $B_0(108)$, then it contains $B_0(108)$ as a subgroup
of index 2 and is generated by $B_0(108)$ and an element $u$ of order 2 that acts on
$B_0(108)$ by conjugacy as follows:
$$ uw_4u = w_{27} \qquad uw_{27}u = w_4 $$
$$ uS_2u = S_3w_{27}S_3^{-1} = S_3^{-1}\tau_3^{-1}w_{27} \qquad
     uS_3u = S_2w_4\tau_3 $$

\noindent For an appropriate choice of $\sigma$, $u^\sigma u^{-1} = \tau_3$.
\end{lemma}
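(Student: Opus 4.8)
The plan is to build on the reductions already in place. By (+) we may fix the generator $\sigma$ of $G(\kd/\kq)$ so that every $u\in A_0(108)\setminus B_0(108)$ satisfies $u^\sigma u^{-1}=\tau_3$, and since $[A_0(108):B_0(108)]\le 2$ we have $u^2\in B_0(108)$, hence $(u^2)^\sigma=u^2$. First I would extract the two facts that drive everything else. Writing $u^\sigma=\tau_3 u$ and expanding $(u^\sigma)^2=(\tau_3 u)^2=\tau_3\,(u\tau_3 u^{-1})\,u^2$, the identity $(u^\sigma)^2=(u^2)^\sigma=u^2$ forces $u\tau_3 u^{-1}=\tau_3^{-1}$. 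Because $\tau_3$ generates the centre of $B_0(108)$ and is not inverted by any inner automorphism, conjugation by $u$ is an \emph{outer} automorphism of $B_0(108)$ that inverts the central $C_3$.

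Next I would reduce the determination of $A_0(108)$ to the identification of a single class in $H:=\operatorname{Out}(B_0(108))\cong C_2\times C_2$. Using the decomposition $B_0(108)\cong D_6\times D_6\times C_3$ recalled above, with the first $D_6=\langle S_2,w_4\rangle$, the second $D_6$ sitting inside $\langle S_3,w_{27}\rangle\cong C_3\wr C_2$, and $C_3=\langle\tau_3\rangle$ the centre, the three nontrivial classes of $H$ are: the pure inversion of the central $C_3$ (fixing both $D_6$ factors); the interchange of the two $D_6$ factors (fixing the centre); and their product. The pure interchange centralises $\tau_3$ and so is excluded by the previous paragraph. The pure central inversion fixes $w_4$, $w_{27}$ and $S_2$ under conjugation, so a representative $u$ would commute with $\langle w_4,w_{27}\rangle$ and with $S_2$; the explicit action of $S_2$ on the weight-$2$ forms determined in Section~3 then forces $u$ to preserve the factors $E=(w_{27}-1)J_{C_1}$ and $(w_4-1)J_{C_2}$ and to act on $E$ as $\pm1$, whence $\gamma=u^\sigma u^{-1}=\tau_3$ would act trivially on $E$ --- contradicting the fact (also from Section~3) that $\tau_3$ multiplies the eigenform $f_{108}$ by a nontrivial cube root of unity. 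This is exactly the point at which the Kenku--Momose argument becomes correct, and it leaves the product class as the only possibility for the image $\bar u$ of $u$ in $H$.

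With $\bar u$ identified as ``interchange composed with central inversion'', the third step is to turn this into the displayed relations. Since conjugation by coset representatives of $uB_0(108)$ realises exactly the automorphisms in the class $\bar u$, I would adjust $u$ by an element of $B_0(108)$ (which preserves both the nontrivial coset and the relation $u^\sigma u^{-1}=\tau_3$, as $B_0(108)$ is $\sigma$-fixed) so that conjugation by $u$ is the specific automorphism $\phi$ interchanging $w_4\leftrightarrow w_{27}$ and sending $S_2\mapsto S_3 w_{27}S_3^{-1}$, $S_3\mapsto S_2 w_4\tau_3$. One checks, using $\tau_3=S_3 w_{27}S_3 w_{27}$ to rewrite $S_3 w_{27}S_3^{-1}=S_3^{-1}\tau_3^{-1}w_{27}$ and a short computation inside $C_3\wr C_2$ (cleanest to confirm in \magma), that these assignments respect all relations of $B_0(108)$, lie in the class $\bar u$, and define an automorphism of order $2$. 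Granting $\phi^2=\mathrm{id}$, conjugation by $u^2$ is trivial, so $u^2\in Z(B_0(108))=\langle\tau_3\rangle$; writing $u^2=\tau_3^{\,j}$ and noting $u\,u^2 u^{-1}=u^2$ while $u\tau_3^{\,j}u^{-1}=\tau_3^{-j}$ gives $\tau_3^{\,2j}=1$, so $3\mid j$ and $u^2=1$. Thus $u$ has order $2$ and satisfies the stated relations, while the final clause $u^\sigma u^{-1}=\tau_3$ is just (+) for the chosen $\sigma$.

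I expect the real obstacle to be the second exclusion rather than the bookkeeping: ruling out the pure central inversion is precisely where Kenku and Momose erred, and it cannot be settled by group theory alone --- it requires importing from Section~3 the explicit action of $S_2$ on the weight-$2$ forms together with the fact that $\tau_3$ acts on the CM factor $E$ by a nontrivial cube root, which is what makes case (ii) of the earlier dichotomy genuinely possible. The remaining difficulty, pinning the images of $S_2$ and $S_3$ to the \emph{unique} consistent choice and verifying that $\phi$ has order $2$, is a finite computation inside $D_6\times(C_3\wr C_2)$ that I would discharge directly or mechanically.
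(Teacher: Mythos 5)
Your proposal is correct and follows essentially the same route as the paper: deriving $u\tau_3u^{-1}=\tau_3^{-1}$ from (+), analysing the image of $u$ in $\operatorname{Out}(B_0(108))\cong C_2\times C_2$ via the decomposition $D_6\times D_6\times C_3$, excluding the pure swap because it centralises $\tau_3$ and the pure central inversion via the $S_2$-action on weight-$2$ forms (whereupon the Kenku--Momose contradiction on $E$ becomes valid), leaving only the product class. Your explicit computations (the expansion of $(u^\sigma)^2$ and the $u^2=\tau_3^j\Rightarrow u^2=1$ normalisation) fill in details the paper compresses into ``we can assume $u$ has $2$-power order,'' but the argument is the same.
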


\section{Construction of a new automorphism}

The notation introduced at the start of the last section is still in force.

\noindent {\bf Conventions:}
\smallskip

If $u$ is an automorphism of $\X$, then we also
think of it as an automorphism of $J$ by the ``Albanese" action: a degree zero
divisor $\sum_i a_i P_i \mapsto \sum_i a_i u(P_i)$. If $\X$ is embedded in $J$ in
the usual way by $i: P \mapsto (P) - (\infty)$ then the actions are compatible up to
translation by $(u(\infty))-(\infty)$. As global differentials on $J$ are translation
invariant, this means that the pullback action $u^*$ on global differentials of
$J$ or $\X$ is the same if we identify global differentials of $J$ and $\X$ by
the pullback $i^*$.

When we identify the weight 2 cusp form $f(z)$ of $\Gamma_0(108)$ with the 
complex differential $(1/2\pi i)f(z)dz$ on $\X$, if $u \in B_0(108)$ then
$u^*f$ is $f|_2 u$ in the notation of Sec. 2.1 \cite{Mi89}, identifying $u$ with
the 2x2 matrix representing it. As we only deal with weight 2 forms we omit the
subscript 2.

If we say that $u$ is represented by matrix $M$ w.r.t. a basis $f_1,\ldots,f_n$
of cusp forms/differentials, we mean that $u^*f_i = \sum_jM_{ji}f_j$. So if
$u$ and $v$ are represented by $M$ and $N$, then $uv$ is represented by $MN$.
\bigskip

$J_0(108)$ decomposes up to isogeny into a product of 10 elliptic curves defined
over $\Q$, as partially described in \cite{KM88}. We work with a natural basis for 
the weight 2 cusp forms of level 108 coming from multiples of the primitive
forms $f_{27}$, $f_{36}$ and $f_{108}$ which generate the CM part as in the last
section, and the two primitive level 54 forms $f^{(1)}_{54}$, $f^{(2)}_{54}$ and their 
multiples by 2 which generate a 4-dimensional non-CM complement. All the forms have 
rational $q$-expansions. We give the initial $q$-expansions of the primitive forms,
isogeny classes of elliptic curves over $\Q$ that they correspond to and the 
eigenvalues for the Atkin-Lehner involutions of the base level (which we refer to
as $W_n$ to differentiate from the $w_n$ involutions for level 108).
\medskip

\noindent\underline{Conductor 27} 
\smallskip

$$ f_{27} = q - 2q^4 - q^7 + 5q^{13} + 4q^{16} - 7q^{19} +O(q^{25}) $$

$$ W_{27} = -1 $$

$$ E_{27}: y^2+y=x^3 \simeq y^2=x^3+16 $$

\noindent $X_0(27)$ is of genus 1 and this is a well-known case (see \cite{Li75}).
$f_{27}$ is $\{\eta(3z)\eta(9z)\}^2$ where $\eta$ is the Dedekind eta function
(\S 4.4 \cite{Mi89}).
\medskip

\noindent\underline{Conductor 36} 
\smallskip

$$ f_{36} = q - 4q^7 + 2q^{13} + 8q^{19} + O(q^{25}) $$

$$ W_9 = 1, W_4 = -1 $$

$$ E_{36}: y^2=x^3+1 $$

\noindent $X_0(36)$ is of genus 1 and this is a well-known case (see \cite{Li75}).
$f_{36}$ is $\eta(6z)^4$.
\medskip

\noindent\underline{Conductor 108} 
\smallskip

$$ f_{108} = q + 5q^7 - 7q^{13} - q^{19} + O(q^{25}) $$

$$ w_{27} = 1, w_4 = -1 $$

$$ E_{108}: y^2=x^3+4 $$

\noindent $f_{108}$ and the action of the Atkin-Lehner operators come from Tables 3
and 5 of \cite{MF75} or from a modular form computer package such as William Stein's.
It is easy to check that the CM elliptic curve $E_{108}$ has conductor 108 with
$f_{108}$ as its associated modular form (which is of the type described in Thm 4.8.2
of \cite{Mi89} with $K = \kq$).
\medskip

\noindent\underline{Conductor 54} 
\smallskip

$$ f^{(1)}_{54} = q - q^2 + q^4 + 3q^5 -q^7 - q^8 - 3q^{10} - 3q^{11} +O(q^{12}) $$

$$ w_{27} = -1, w_2 = 1 $$

$$ f^{(2)}_{54} = q + q^2 + q^4 - 3q^5 -q^7 + q^8 - 3q^{10} + 3q^{11} +O(q^{12}) $$

$$ w_{27} = 1, w_2 = -1 $$

$$ E^{(1)}_{54}: y^2+xy=x^3-x^2+12x+8\quad E^{(2)}_{54}: y^2+xy+y=x^3-x^2+x-1 $$

\noindent The $f^{(i)}_{54}$ and the action of the Atkin-Lehner operators again come from Tables 3
and 5 of \cite{MF75} or from a modular form computer package.
It is easy to check that the elliptic curves given have conductor 54 and have the
respective $f^{(i)}_{54}$ as their associated modular forms. In fact the $E$s are quadratic
twists of each other by $-3$ and the two $f^{(i)}_{54}$ are twists by the quadratic character
of $\kq$. 
\medskip

\begin{definition}\label{defndelta}
$\delta_n$ is the operator on modular forms given by the matrix
$\left({n\atop 0}{0\atop 1}\right)$, so that if $f(z)$ is a weight
2 form, $(f|\delta_n)(z)$ is the form $nf(nz)$.
\end{definition}
\medskip

\begin{definition}\label{basis}
$e_1,\ldots,e_{10}$ are the basis for the weight 2 cusp forms of $\Gamma_0(108)$
defined as follows:
$$ e_1 = f^{(2)}_{54} - f^{(2)}_{54}|\delta_2 \qquad
                                e_2 = f^{(2)}_{54} + f^{(2)}_{54}|\delta_2 \qquad
 e_3 = f^{(1)}_{54} - f^{(1)}_{54}|\delta_2 \qquad
                                e_4 = f^{(1)}_{54} + f^{(1)}_{54}|\delta_2 $$
$$ e_5 = f_{27} + f_{27}|\delta_4 \qquad e_6 = f_{27}|\delta_2 \qquad
           e_7 = f_{36} + f_{36}|\delta_3 $$
$$ e_8 = f_{108}\qquad e_9 = f_{27} - f_{27}|\delta_4 \qquad 
           e_{10} = f_{36} - f_{36}|\delta_3 $$
The standard decomposition into new and old forms (Section 4.6, \cite{Mi89}) shows
that $e_1,\ldots,e_{10}$ form a basis for the weight 2 cusp forms of $\Gamma_0(108)$

Identifying these cusp forms with differential forms on $\X$ and $J$,
 $V := \langle e_1,\ldots,e_4\rangle$ is the subspace corresponding to differentials of $J_H$
and $W := \langle e_5,\ldots,e_{10}\rangle$ the subspace corresponding to
$J_{C_1} + J_{C_2}$.
All endomorphisms of $J$ preserve these subspaces.   
\end{definition}

\begin{lemma}\label{le2}
With respect to the basis $e_1,\ldots,e_4$ of $V$, $w_{4},w_{27},S_2$ and $S_3$
act by the following matrices ( $\zeta := exp(2\pi i/3)$,
$\rtm := \zeta-\zeta^{-1}$ )

$$\left(\begin{array}{cccc}1 & 0 & 0 & 0\\0 & -1 & 0 & 0\\0 & 0 & 1 & 0\\
    0 & 0 & 0 & -1\end{array}\right) \qquad
  \left(\begin{array}{cccc}1 & 0 & 0 & 0\\0 & 1 & 0 & 0\\0 & 0 & -1 & 0\\
    0 & 0 & 0 & -1\end{array}\right)$$

$$ \frac{1}{2}\left(\begin{array}{cccc}-1 & -3 & 0 & 0\\-1 & 1 & 0 & 0\\0 & 0
     & -1 & -3\\0 & 0 & -1 & 1\end{array}\right)\qquad
   \frac{1}{2}\left(\begin{array}{cccc}-1 & 0 & \rtm & 0\\0 & -1 & 0 & \rtm\\
     \rtm & 0 & -1 & 0\\ 0 & \rtm & 0 & -1\end{array}\right)$$

With respect to the basis $e_5,\ldots,e_{10}$ of $W$,$w_{4},w_{27},S_2$ and $S_3$
act by the following matrices

$$\left(\begin{array}{cccccc}1 & 0 & 0 & 0 & 0 & 0\\0 & 1 & 0 & 0 & 0 & 0\\
  0 & 0 & -1 & 0 & 0 & 0\\0 & 0 & 0 & -1 & 0 & 0\\0 & 0 & 0 & 0 & -1 & 0\\
  0 & 0 & 0 & 0 & 0 & -1\end{array}\right) \qquad
  \left(\begin{array}{cccccc}-1 & 0 & 0 & 0 & 0 & 0\\0 & -1 & 0 & 0 & 0 & 0\\
  0 & 0 & 1 & 0 & 0 & 0\\0 & 0 & 0 & 1 & 0 & 0\\0 & 0 & 0 & 0 & -1 & 0\\
  0 & 0 & 0 & 0 & 0 & -1\end{array}\right)$$

$$\left(\begin{array}{cccccc} -1/2 & 0 & 0 & 0 & -3/2 & 0\\
   0 & 1 & 0 & 0 & 0 & 0\\0 & 0 & -1 & 0 & 0 & 0\\0 & 0 & 0 & -1 & 0 & 0\\
   -1/2 & 0 & 0 & 0 & 1/2 & 0\\0 & 0 & 0 & 0 & 0 & -1\end{array}\right) \qquad
  \left(\begin{array}{cccccc}\zeta & 0 & 0 & 0 & 0 & 0\\
   0 & \zeta^{-1} & 0 & 0 & 0 & 0\\0 & 0 & -(1/2)\zeta^{-1} & 0 & 0 & -(1-\zeta)/2\\
   0 & 0 & 0 & \zeta & 0 & 0\\0 & 0 & 0 & 0 & \zeta & 0\\
   0 & 0 & -(1-\zeta)/2 & 0 & 0 & -(1/2)\zeta^{-1}\end{array}\right)$$
\end{lemma}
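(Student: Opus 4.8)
The plan is to treat the basis as a union of five ``degeneracy packets'': $\langle f_{27},\,f_{27}|\delta_2,\,f_{27}|\delta_4\rangle$, $\langle f_{36},\,f_{36}|\delta_3\rangle$, $\langle f_{108}\rangle$, and the two packets $\langle f^{(i)}_{54},\,f^{(i)}_{54}|\delta_2\rangle$ for $i=1,2$; I would compute the pullback of each generator on each packet and then change coordinates to $e_1,\dots,e_{10}$. Because the operators $\delta_n$ and the generators of $B_0(108)$ respect this decomposition --- apart from a twist that interchanges the two level-$54$ packets under $S_3$ --- the matrices assemble block by block, and the claimed block-diagonal shape (with the $e_7,e_{10}$ and $e_1,e_3$-type couplings) is exactly what this bookkeeping predicts.

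For $S_2$ and $S_3$ I would work on $q$-expansions, where $S_v$, being translation by $1/v$, multiplies the $n$-th Fourier coefficient by $e^{2\pi i n/v}$; the task is to re-expand the translate inside its packet, and the correct Hecke identity depends on the prime. If $p$ exactly divides the level of the newform $f$, then $f|U_p=a_pf$, so $\tfrac1p\sum_j f(z+j/p)=(f|U_p)(pz)$; for $p=2$ this gives $S_2 f=a_2\,(f|\delta_2)-f$ and $S_2(f|\delta_2)=f|\delta_2$, settling the level-$54$ forms (where $a_2=-W_2$). If instead $p\nmid\text{level}$ --- as for $2$ on $f_{27}$ --- one must use $f|U_p=a_pf-p^{\,k-2}(f|\delta_p)$ instead; with $a_2(f_{27})=0$ this yields $S_2 f_{27}=-f_{27}-(f_{27}|\delta_4)$, and passing to $e_5,e_6,e_9$ produces the corresponding block. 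The action of $S_3$ I would read off from the complex multiplication: each of $f_{27},f_{36},f_{108}$ has CM by $\kq$, hence nonzero coefficients only for $n\equiv 1\pmod 3$, so $S_3$ scales such an $f$ by $\zeta:=e^{2\pi i/3}$ and scales each degeneracy translate $f|\delta_d$ by $\zeta^{d}$ (reading the exponent modulo $3$), i.e.\ by $\zeta^{-1}$ for $f_{27}|\delta_2$, by $\zeta$ for $f_{27}|\delta_4$, and by $1$ for $f_{36}|\delta_3$; the diagonal CM entries and the $2\times2$ mixing of $e_7,e_{10}$ then fall out of the change of basis. For the non-CM level-$54$ forms I would use that $f^{(1)}_{54}$ and $f^{(2)}_{54}$ are twists by the quadratic character $\chi$ of $\kq$, together with the splitting $\zeta^n=-\tfrac12+\tfrac{\rtm}{2}\chi(n)$ for $3\nmid n$ (and $a_n=0$ for $3\mid n$), which rewrites $S_3 f^{(i)}$ as $-\tfrac12 f^{(i)}+\tfrac{\rtm}{2}f^{(j)}$ with $j\ne i$, and similarly on the $\delta_2$-images, producing the $\rtm$-block.

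For the Atkin--Lehner generators $w_4,w_{27}$ I would invoke the theory of Atkin--Lehner--Li: $w_Q$ factors into commuting local involutions $w_{p^{a_p}}$, each acting only in its own degeneracy direction. In a direction coprime to $Q$ the operator commutes with $\delta$ and acts by the tabulated base-level eigenvalue $W_n$; in its own direction, if the $p$-parts of the conductor and of the level are $p^b$ and $p^a$, then $w_{p^a}$ sends $f|\delta_{p^i}\mapsto W_{p^b}(f)\,f|\delta_{p^{(a-b)-i}}$, a flip of the degeneracy index weighted by the base eigenvalue. Since $a_2(f_{27})=0$ and $a_3(f_{36})=0$, the relevant flips carry no correction terms; feeding in the eigenvalues from the conductor tables and rewriting in the $e_k$ then yields the $w_4$ and $w_{27}$ matrices.

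The main obstacle I anticipate is keeping the normalizations coherent: one must carry the $\det^{\,k-1}$-normalized slash consistently through the $U_p$ and degeneracy identities, and above all use the Hecke relation appropriate to each prime --- the $p\,\|\,$level identity $f|U_p=a_pf$ versus the $p\nmid\text{level}$ identity $f|U_p=a_pf-p^{\,k-2}(f|\delta_p)$ --- since a packet only closes under a translate once the CM-vanishing of the coefficients on residue classes mod $3$ (and mod $2$) is taken into account. The single most delicate step is the $S_3$-action on the non-CM level-$54$ pair via the $\chi$-twist, where $\rtm$ first enters; the remaining entries are then routine changes of basis.
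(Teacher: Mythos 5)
Your proposal is correct and follows essentially the same route as the paper's proof: packet-by-packet $q$-expansion computations for $S_2$ and $S_3$ (the $T_2$/$U_2$ eigenvalue identity $f|S_2=-f+a_2\,f|\delta_2$ with the extra $\delta_4$ term for $f_{27}$, the mod-$3$ support of the CM forms, and the quadratic-twist relation between the two level-$54$ newforms), together with matrix identities exchanging degeneracy maps and base-level Atkin--Lehner involutions for $w_4$ and $w_{27}$. One minor misattribution: the flips $f|\delta_{p^i}\mapsto W_{p^b}(f)\,f|\delta_{p^{(a-b)-i}}$ are exact matrix identities with no correction terms for \emph{any} newform, so the vanishing of $a_2(f_{27})$ and $a_3(f_{36})$ is not what makes them clean --- those facts are needed (and you use them correctly) only in the $S_2$ and $S_3$ computations.
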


\begin{proof} The proof is a straightforward computation using relations between
Atkin-Lehner involutions and the $\delta_i$ and congruence conditions on
the exponents of the non-zero terms of the $q$-expansions to find the $S_2$ and
$S_3$ actions.
\smallskip

For $S_2$: All forms $f|\delta_i$ where $i$ is 2 or 4 are clearly fixed by $S_2$.
Generally, considering $q$-expansions, if a form $f$ is an eigenvalue of the Hecke
operator $T_2$ at its even base level with eigenvalue $e$, then we see that
$f|S_2 = -f + ef|\delta_2$. Note that $f_{36}|\delta_3$ is still an eigenvector
of $T_2$ with eigenvalue 0. This leaves only $f_{27}$ to consider. As it is
killed by $T_2$, the definition of the $T_2$ action quickly leads to
$f_{27}|S_2 = -(f_{27}+f_{27}|\delta_4)$.
\smallskip

For $S_3$: $f_{27}, f_{36}$ and $f_{108}$ all
have $q$-expansions where all non-zero terms $a_nq^n$ have $n = 1$ mod 3.
This follows from the fact that they are eigenvalues of all Hecke operators
and that $a_p = 0$ if $p=2$ mod 3, $p > 2$, as the associated elliptic curves
have supersingular reduction at these primes so $p|a_p$ and $|a_p| < 2\surd p$.
$a_2$ and $a_3$ are clearly also zero. As $f^{(1)}_{54}$ and $f^{(2)}_{54}$ are
twists by the $(\frac{-3}{.})_2$ quadratic character and are killed by the $T_3$
operator, $f^{(1)}_{54}+f^{(2)}_{54} = \sum_{n = 1 (3)} a_nq^n$ and 
$f^{(1)}_{54}-f^{(2)}_{54} = \sum_{n = 2 (3)} b_nq^n$. From these facts,
the action of $S_3$ on the basis follows easily.
\smallskip

For $w_4$: Simple matrix computations show that $(f|\delta_2)|w_4 = f|W_2$
and $f|w_4=(f|W_2)|\delta_2$ for a level 54 form $f$.
Similarly, $f|w_4=f|\delta_4$, $(f|\delta_2)|w_4=f|\delta_2$ and
$(f|\delta_4)|w_4=f$ for level 27 forms; $f|w_4=f|W_4$ and $(f|\delta_3)|w_4
=(f|W_4)|\delta_3$ for level 36 forms. The full $w_4$ action follows.
\smallskip

For $w_{27}$: Again, simple matrix computations show that $f|w_{27} = f|W_{27}$
and $(f|\delta_2)|w_{27} = (f|W_{27})|\delta_2$ for level 54 forms;
$f|w_{27} = f|W_{27}$ and $(f|\delta_i)|w_{27} = (f|W_{27})|\delta_i$ ( $i=2$ or 4)
for level 27 forms; $f|w_{27} = (f|W_9)|\delta_3$ and $(f|\delta_3)|w_{27} = 
(f|W_9)$ for level 36 forms.The full $w_{27}$ action follows.
\end{proof}

{\it Note:} From the above lemma, we see that $\tau_3$ acts trivially on $V$
and multiplies each $e_i$ for $i \ge 5$ by $\zeta$ or $\zeta^{-1}$ as asserted
in the last section.
\bigskip

Using the commutator conditions for a new automorphism $u$ of order 2 as described
in Lemma \ref{le1}, it is now easy to show that $u$ acts on weight two forms by a matrix
$\pm M$ (w.r.t. the $e_i$ basis) with $M$ of the form

$$\left(\begin{array}{cc} {\begin{array}{cccc}1 & 0 & 0 & 0\\0 & 0 & z^{-1} & 0\\
   0 & z & 0 & 0\\0 & 0 & 0 & 1\end{array}} & \mbox{\Huge{\bf 0}}\\
  \mbox{\Huge{\bf 0}} & {\begin{array}{cccccc} 0 & 0 & -za & 0 & 0 & 0\\
  0 & 0 & 0 & b & 0 & 0\\ (-za)^{-1} & 0 & 0 & 0 & 0 & 0\\
  0 & b^{-1} & 0 & 0 & 0 & 0\\ 0 & 0 & 0 & 0 & 0 & a\\
  0 & 0 & 0 & 0 & a^{-1} & 0\end{array}}\end{array}\right)$$

with $a,b \in \C^*$ and $z = \rtm$ as defined in Lemma \ref{le2}.
\medskip

We can now complete the construction of $u$ on a canonical model of $\X$ with
detailed computations that can be carried out using a suitable computer
algebra system. The author performed these with \magma. There are 3 steps.

\begin{enumerate}
\item Compute a basis $R$ for the degree 2 canonical relations for $\X$ embedded
into $\Prj^9$ via the differential basis corresponding to the forms $e_i$. These
relations generate the full ideal defining $\X$ in $\Prj^9$.
\item Substitute the automorphism of $\Prj^9$ given by $M$ into $R$ treating $a$ and
$b$ as indeterminates. Clear powers of $a$ and $b$ from denominators. The condition
that each new degree 2 form must lie in the the span of $R$ gives a number of
polynomial relations on $a$ and $b$ that generate a zero dimensional ideal $I_{a,b}$
of $\kq[a,b]$.
\item Compute a lex Gr\"obner basis of $I_{a,b}$. From this, we can read off all
solutions for $a$ and $b$ such that $M$ gives an automorphism of $\X$.
\end{enumerate} 
\medskip

For the first step we need to find a basis for the linear relations between the 55
weight 4 cusp forms $e_ie_j$, $1 \leq i \leq j \leq 10$. Considering it as a regular
differential of degree 2 (see Section 2.3 \cite{Mi89} - note that there are no
elliptic points here), a weight 4 form for $\Gamma_0(108)$ that vanishes to order
at least 2 at each cusp is zero iff it has a $q$-expansion $\sum_{n \ge 2} a_nq^n$ with
$a_n = 0$, $\forall n \le 38$. So the computation reduces to finding a basis for
the kernel of a $55 \times 37$ matrix with integer entries. In practise, it is
good to work to a higher $q$-expansion precision than 38 and we actually did the
computation with the expansions up to $q^{150}$. This still only took
a fraction of a second in \magma. Applying an LLL-reduction to get a nice basis
for the relations, the result is that the canonical model for $\X$ in $\Prj^9$
with coordinates $x_i$ is defined by the ideal generated by the following 28
degree 2 polynomials:

$$ x_3x_4 + x_6x_9 - x_5x_{10},\quad x_1x_2 - x_6x_9 - x_5x_{10},\quad
    x_2x_6 - x_3x_7 + x_1x_{10},\quad x_4x_5 - x_1x_8 + x_6x_{10} $$
$$ x_1x_8 - x_3x_9 + x_6x_{10},\quad x_4x_6 + x_1x_7 - x_3x_{10},\quad
    x_4x_7 - 2x_8x_9 + x_2x_{10},\quad x_3x_7 - 2x_5x_8 + x_1x_{10} $$
$$ x_2x_5 - 2x_3x_8 + x_1x_9,\quad x_2x_5 - 2x_6x_7 - x_1x_9,\quad
    x_2x_4 + x_7x_9 - 2x_8x_{10},\quad x_1x_7 - 2x_5x_9 + x_3x_{10} $$
$$ x_2x_3 - x_5x_7 - 2x_6x_8,\quad x_2x_3 + x_1x_4 - 2x_5x_7,\quad
    x_7^2 - x_2x_8 - x_4x_9 + x_{10}^2,\quad x_1^2 - x_3^2 + 2x_5x_6 $$
$$ 3x_1x_5 - 2x_4x_8 - x_2x_9,\quad 3x_6^2 - x_7^2 + x_{10}^2,\quad
    3x_1x_3 - x_7x_9 - 2x_8x_{10},\quad 3x_1x_6 + x_4x_7 - x_2x_{10} $$
$$ 3x_3x_6 - x_2x_7 + x_4x_{10},\quad 3x_3x_5 - x_7^2 - x_2x_8 - x_{10}^2,\quad
    3x_1^2 - x_4^2 - 2x_9x_{10},\quad x_2^2 - 3x_3^2 + 2x_9x_{10} $$
$$     x_2^2 + x_4^2 - 4x_7x_8 + 2x_9x_{10},\quad 
  x_2x_7 - 4x_8^2 + 2x_9^2 + x_4x_{10},\quad x_4x_8 + x_2x_9 - 2x_7x_{10},\quad
    3x_5^2 - x_2x_7 - x_9^2 - x_4x_{10} $$
\smallskip

We are using the fact that $\X$ is not hyperelliptic \cite{Ogg74}. This follows from
the above anyway, since there would be 36 canonical quadric relations if it were.
However, it needs to be checked that $\X$ is not {\it trigonal} (having a
degree 3 rational function) when there would be independent degree 3 relations.
For this, it is only necessary, for example, to verify that the ideal defined by the 
above polynomials has the right Hilbert series. This was easily verified
in \magma\ which uses a standard Gr\"obner based algorithm \cite{BS92}.
\medskip

For the second step, we work over $K(a,b)$, $K = \Q(z)$, apply the substitution
$x_i \mapsto \sum_{1 \le j \le 10} M_{j,i}x_j$ to the above polynomials, and the
rest is strightforward linear algebra. Applying the Gr\"obner basis algorithm, we
find that $I_{a,b}$ is generated as an ideal by the two polynomials
$$ b - za^2,\qquad a^3 + (1/2) $$
which gives 3 possibilities for $u$ with $a$ any cube root of $-1/2$. We remark that 
if $u$ is one of these automorphisms then the other two are $u\tau_3$ and $u\tau_3^{-1}$
as expected. We also check that $u^\sigma u^{-1} = \tau_3$ if
$a^\sigma = exp(2\pi i/3) a$.
\medskip

\begin{theorem} $B_0(108)$ is of index two in $A_0(108)$, which has the structure
described in Lemma \ref{le1}. $u$ is given explicitly on the canonical model of
$\X$ with the above defining equations by
$$ [x_1:x_2:x_3:x_4:x_5:x_6:x_7:x_8:x_9:x_{10}] \mapsto $$
$$ [x_1:zx_3:(1/z)x_2:x_4:(c/z)x_7:(c^2/z)x_8:(z/c)x_5:(z/c^2)x_6:-cx_{10}:-(1/c)x_9] $$
where $c^3 = 2$ and $z = \sqrt{-3}$ with $\Im(z) > 0$.
\end{theorem}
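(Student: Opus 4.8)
The plan is to assemble the pieces already in place rather than to introduce new ideas: the substance of the theorem is (a) that $A_0(108)$ is genuinely \emph{larger} than $B_0(108)$, so the index is exactly $2$ and not $1$; (b) that the group structure is then forced by Lemma~\ref{le1}; and (c) that the displayed coordinate map is precisely the automorphism coming from the matrix $M$ of Section~3. First I would observe that steps (1)--(3) reduce the existence of a new $u$ to the solvability of the ideal $I_{a,b}$. The Gr\"obner basis computation gives $I_{a,b} = \langle b - za^2,\, a^3 + 1/2\rangle$, which certainly has solutions (three of them, with $a$ any cube root of $-1/2$). Fixing such a solution and forming $M$, the defining property of $I_{a,b}$ is exactly that $M$ carries each of the $28$ quadric generators of the canonical ideal into the span of that same set. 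Since those quadrics generate the full ideal of $\X \subset \Prj^9$, the induced projective-linear involution preserves $\X$ and so restricts to an automorphism $u$ of $\X$.

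Next I would show $u \notin B_0(108)$, which is what upgrades ``index $1$ or $2$'' to ``index $2$''. The chosen $a$ satisfies $a^3 = -1/2$, so $u$ is defined over $\kd = \Q(\rtm,\sqrt[3]{2})$ but not over $\kq = \Q(\rtm)$; as every element of $B_0(108)$ is defined over $\kq$, it follows that $u$ is new. Combined with statement (+) of Section~2 this forces $[A_0(108):B_0(108)] = 2$, and the isomorphism type is then the one pinned down in Lemma~\ref{le1}. I would also record the two compatibility checks: that $M^2 = I$ (each paired coordinate block has reciprocal off-diagonal entries and hence squares to the identity), so the projective involution $u$ has order $2$; and that $u^\sigma u^{-1} = \tau_3$ once $\sigma$ is chosen with $a^\sigma = \zeta a$, matching (+) and Lemma~\ref{le1}.

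Finally I would translate the action of $M$ on differentials into the stated action on projective coordinates. Under the convention $u^* e_i = \sum_j M_{ji} e_j$, the induced map on the canonical image is $[x] \mapsto [M^{\mathsf{T}} x]$, so the $i$-th new coordinate is the $i$-th column of $M$ paired against $x$. Substituting $a = -1/c$ with $c^3 = 2$ (equivalently $a^3 = -1/2$) and $b = za^2 = z/c^2$, the nonzero entries $(-za)^{\pm 1},\ b^{\pm 1},\ a^{\pm 1}$ become $c/z,\ z/c,\ c^2/z,\ z/c^2,\ -c,\ -1/c$ in the appropriate positions, while the $V$-block contributes the fixed coordinates $x_1,x_4$ together with the swap $x_2 \leftrightarrow x_3$ scaled by $z^{\pm 1}$. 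This reproduces the displayed formula verbatim.

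The main obstacle is not conceptual but verificational: the whole argument rests on the claim that this single matrix $M$ preserves all $28$ canonical quadrics, i.e.\ that $I_{a,b}$ really is cut out by $b - za^2$ and $a^3 + 1/2$. That is a finite but genuinely computer-assisted linear-algebra-plus-Gr\"obner computation over $\Q(\rtm)(a,b)$, and the delicate points are (i) confirming that the $28$ quadrics generate the \emph{entire} ideal of $\X$ (which rests on $\X$ being neither hyperelliptic nor trigonal, checked via the Hilbert series), and (ii) clearing the denominators in $a$ and $b$ correctly, so that the relations one extracts genuinely generate a zero-dimensional ideal. Once these are trusted, everything else is bookkeeping.
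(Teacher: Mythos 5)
Your proposal is correct and follows essentially the same route as the paper, whose ``proof'' of this theorem is precisely the preceding construction: the Gr\"obner computation $I_{a,b}=\langle b-za^2,\,a^3+1/2\rangle$ yields a linear map preserving the $28$ canonical quadrics (hence an automorphism, since they cut out $\X$ exactly, $\X$ being neither hyperelliptic nor trigonal), the check $u^\sigma u^{-1}=\tau_3\neq 1$ shows $u\notin B_0(108)$ so that (+) and Lemma~\ref{le1} force index exactly $2$ with the stated structure, and the substitution $a=-1/c$, $b=za^2=z/c^2$ into $[x]\mapsto[M^{\mathsf T}x]$ reproduces the displayed formula (the sign ambiguity $\pm M$, which the paper resolves in a remark via Hurwitz, is immaterial projectively). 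Your bookkeeping and your identification of the genuinely computer-assisted steps match the paper's.
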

\medskip

{\it Remarks:}
\begin{enumerate}
\item The action of $u$ on differentials is given by $\pm M$ where $M$ is the matrix
on page 8. $M$ has 1 (resp.~$-1$) as an eigenvalue of multiplicity 6 (resp.~4). Let
$Y = \X/\langle u\rangle$. If the action was by $M$, then the genus of $Y$, $g_Y$,
would be 6.
The Hurwitz formula would then give a value of $-2$ for the number of fixed points of $u$.
Thus $u$ acts on differentials by $-M$, $g_Y = 4$, and $u$ has two fixed points on $\X$.
\item On our canonical model of $\X$, the cusp $\infty$ is given by the point
$(1:1:1:1:1:0:1:1:1:1)$ and the generators of $B_0(108)$ act via the matrices given
in Lemma \ref{le2}. It is then easy to compute all of the cuspidal points as
the cusps form a single orbit under $B_0(108)$ and to verify that
$u(\{cusps\}) \cap \{cusps\} = \emptyset$. This is in accordance with Corollary 2.4 of
\cite{KM85} which says that $u$ would lie in $B_0(108)$ if it mapped a cusp to a cusp.
\end{enumerate}

\bibliographystyle{amsalpha}
\bibliography{mike.bib}
\end{document}